\newtheorem{theorem}{Theorem}
\theoremstyle{plain}
\newtheorem{lemma}{Lemma}
\newtheorem{problem}{Problem}
\newtheorem{remark}{Remark}
\numberwithin{equation}{section}
\begin{document}

\title[The Fermat-Torricelli point for tetrahedra]{A fundamental property of the Fermat-Torricelli point for tetrahedra in the three dimensional Euclidean Space}
\author{Anastasios N. Zachos}
\address{University of Patras, Department of Mathematics, GR-26500 Rion, Greece}
\email{azachos@gmail.com}
\keywords{Fermat-Torricelli point, tetrahedra} \subjclass[2010]{Primary 51M14,51M20; Secondary 51M16}

\begin{abstract}
We prove the following fundamental property for the Fermat-Torricelli point for four non-collinear and non-coplanar points forming a tetrahedron in $\mathbb{R}^{3},$ which states that:
The three bisecting lines having as a common vertex the Fermat-Torricelli point formed by each pair of equal angles, which are seen by the opposite edges of the tetrahedron  meet perpendicularly at the Fermat-Torricelli point. Furthermore, we give an alternative proof, which is different from the one obtained by Bajaj and Mehlhos for the unsolvability of the Fermat-Torricelli problem for tetrahedra in $\mathbb{R}^{3}$ using only algebraic computations for some angles, which have as a common vertex the Fermat-Torricelli point of the tetrahedron.
\end{abstract}\maketitle

\section{Introduction}

The Fermat Problem for four non-collinear and non-coplanar points $A_{i}(x_{i},y_{i},z_{i})$ forming a tetrahedron $A_{1}A_{2}A_{3}A_{4}$  in $\mathbb{R}^{3}$ states that (\cite{abuSaymehHajja:97},\cite{KupitzMartini:94}, \cite{KupitzMartini:99}, \cite{Zach/Zou:09}):

\begin{problem}[The Fermat problem for $A_{1}A_{2}A_{3} A_{4}$ in $\mathbb{R}^{3}$]\label{WFN}
Find $A_{0}(x,y,z)$ in $\mathbb{R}^{3},$ such that:

\begin{equation}\label{objectivewfrn}
f(\{A_{0}\})=\sum_{i=1}^{4}\sqrt{(x-x_{i})^2+(y-y_{i})^2+(z-z_{i})^2}\to min.
\end{equation}

\end{problem}

The unsolvability of the Fermat-Torricelli problem for tetrahedra in $\mathbb{R}^{3}$ has been proved by Bajaj, Mehlhos, Melzak and Cockane in \cite{Bajaj:88},\cite{Mehlhos:00}, \cite{MelzakCockayne:69}, by applying Galois theory in some specific examples. Therefore, there is no Euclidean construction to locate the Fermat-Torricelli point $A_{0}$ of $A_{1}A_{2}A_{3}A_{4}$ in $\mathbb{R}^{3}.$

It is worth mentioning that Synge (\cite{Synge:87}) was the first who gave a non-Euclidean construction for the Fermat-Torricelli point using
some spindles. He considered around the two opposite edges of the tetrahedron and created two isosceles triangles containing two angles $\pi -\alpha_{102}$  and $\pi -\alpha_{304}$ and by rotating two circular arcs having as chords these skew edges he showed that there is a common value such that $\alpha_{102}=\alpha_{304},$ whuch yields a unique touching point (Fermat-Torricelli point) of the two spindles. Rubinstein, Thomas and Weng (\cite{RubinsteinThomasWeng:02}) use a more specific construction to find a Steiner tree having two Fermat-Torricelli points for a tetrahedron in $\mathbb{R}^{3},$ which is based on the Simpson line formed by the two vertices of two equilateral triangles with side lengths the two skew edges and located at the exterior of the tetrahedron. Recently, we use a similar construction with Synge (\cite[Theorem~5]{Zachos:21}) by constructing some isosceles triangles at the exterior of the skew edges of the tetrahedron, in order locate the Fermat-Torricelli point inside a tetrahedron in $\mathbb{R}^{3}.$

Kupitz, Martini, Abu-Saymeh, Hajja proved various properties of the Fermat-Torricelli point for some specific classes of tetrahedra having their opposite edges equal (isosceles tetrahedra) (\cite{KupitzMartini:94},\cite{abuSaymehHajja:97}).

It is well known that the existence and uniqueness of the Fermat point $A_{0}$ in $\mathbb{R}^{3}$ is derived by the convexity of the Euclidean norm (distance) and compactness arguments.

Sturm and Lindelof gave a complete characterization of the
solutions of the Fermat problem for $m$ given
points in $\mathbb{R}^{n}$ (\cite{Sturm:84},\cite{Lindelof:67}). Kupitz and Martini gave an alternative proof by using subdifferential calculus
(\cite{KupitzMartini:97}, \cite{KupitzMartini:99}). Eriksson and Noda Sakai Morimoto discovered some new characterizations for the Fermat-Torricelli point for tetrahedra in $\mathbb{R}^{3}$ ((\cite{Eriksson:97}, \cite{NodaSakaiMorimoto:91}).

We shall focus on the characterization of solutions for $m=4,$ $n=3$ ((\cite{KupitzMartini:97}, \cite{KupitzMartini:99}))
Let $\{A_{1},A_{2},A_{3},A_{4}\}$ be a tetrahedron and $A_{0}$ be a point in $\mathbb{R}^{3}.$ We denote by
$\vec {u}(A_j,A_i)$ the unit vector from $A_{j}$ to $A_{i}$ for $i,j=0,1,2,3,4.$

Two cases may occur:

(I) If for each point  $A_{i}\in \{A_{1},A_{2},A_{3},A_{4}\}$
\[ \|\sum_{j=1,j\ne i}^{4}\vec {u}(A_j,A_i)\|>1, \]
for $i,j=1,2,3,4,$ then

(a) $A_{0}$ does not belong to $\{A_{1},A_{2},A_{3},A_{4}\},$

(b) $ \sum_{i=1}^{4}\vec {u}(A_0,A_i)=\vec{0}$

(Fermat-Torricelli solution).

(II) If there is a point $A_{i}\in\{A_{1},A_{2},A_{3},A_{4}\}$
satisfying
\[ \|{\sum_{j=1,j\ne i}^{4}\vec {u}(A_j,A_i)}\|\le 1.
\] for $i,j=1,2,3,4,$ then  $A_{0} \equiv A_i$

(Fermat-Cavallieri solution).

Hence, we get two characterization of solutions for the Fermat problem for $A_{1}A_{2}A_{3}A_{4}$ in $\mathbb{R}^{3}.$

The Fermat-Torricelli solution is a tree, which consists of the quadruple of line segments $\{A_{0}A_{1},A_{0}A_{2},A_{0}A_{3},A_{0}A_{4}\}.$

The Fermat-Cavallieri tree solution is a tree, which consists of the triad of line segments $\{A_{i}A_{j},A_{i}A_{k},A_{i}A_{l}\},$
for $i,j,k,l=1,2,3,4, i\neq j\ne k\ne l.$

Abu-Abas, Abu-Saymeh and Hajja proved the non-isogonal property of the Fermat-Torricelli point in $\mathbb{R}^{3}$ (\cite{AbuAbbasHajja}, \cite{abuSaymehHajja:97})

In this paper, we prove a fundamental property of the Fermat-Torricelli point for tetrahedra in $\mathbb{R}^{3},$ by using basic algebra of vectors, which are transformed in spherical coordinates in $\mathbb{R}^{3}$ and we give an alternative proof for the unsolvability of the Fermat-Torricelli point for tetrahedra in $\mathbb{R}^{3},$ by obtaining an implicit expression for two angles having as a common vertex the Fermat-Torricelli point. Our main results are:  

Main Result 1.

The three bisecting lines having as a common vertex the Fermat-Torricelli point $A_{0}$ and formed by each pair of equal angles, which are seen by the opposite edges of $A_{1}A_{2}A_{3}A_{4}$  meet perpendicularly at $A_{0}$ (Section~2, Theorem~\ref{fundproperty})

Main Result 2.

The Fermat-Torricelli problem for four non-collinear and non-coplanar points forming a tetrahedron in $\mathbb{R}^{3}$ is not in general solvable by Euclidean constructions (Section~3, Theorem~\ref{unsolvabilityFTtetrahedron}).

\section{A fundamental property of the Fermat-Torricelli point for tetrahedra in $\mathbb{R}^{3}$}

%--------------------------------------------------------------------------------------------------------

Let $A_{1}A_{2}A_{3}A_{4}$ be a tetrahedron and $A_{0}$ be the Fermat-Torricelli point inside $A_{1}A_{2}A_{3}A_{4}$ in $\mathbb{R}^{3}$
We denote by $a_{i,j0k}$ the angle that is formed by the line segment that connects $A_{0}$ with the trace of the orthogonal projection $A_{i}$
to the plane defined by the triangle $\triangle A_{j}A_{0}A_{k}$  with the line segment $A_{i}A_{0}.$
We set $\alpha_{i0j}\equiv \angle A_{i}A_{0}A_{j}.$

We need the following well known lemma (\cite{Mehlhos:00},\cite{Synge:87}), in order to prove the main result (Theorem~\ref{fundproperty}):

\begin{lemma}\label{sumcosines}
If  \[ \|\sum_{j=1,j\ne i}^{4}\vec {u}(A_j,A_i)\|>1, \]
then
\begin{equation}\label{coseq1}
\cos\alpha_{102}=\cos\alpha_{304},
\end{equation}

\begin{equation}\label{coseq2}
\cos\alpha_{203}=\cos\alpha_{104},
\end{equation}

\begin{equation}\label{coseq3}
\cos\alpha_{103}=\cos\alpha_{204}
\end{equation}

and

\begin{equation}\label{coseq4}
1+\cos\alpha_{102}+\cos\alpha_{103}+\cos\alpha_{104}=0.
\end{equation}

\end{lemma}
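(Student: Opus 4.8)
The plan is to reduce everything to the vector balancing condition furnished by part (I)(b) of the Kupitz--Martini characterization recalled in the Introduction. Since $\|\sum_{j=1,j\ne i}^{4}\vec{u}(A_j,A_i)\|>1$ holds for every $i$, we are in Case (I), so $A_{0}$ is a genuine Fermat--Torricelli point distinct from the vertices and
\[
\vec{u}(A_0,A_1)+\vec{u}(A_0,A_2)+\vec{u}(A_0,A_3)+\vec{u}(A_0,A_4)=\vec{0}.
\]
Write $\vec{e}_i:=\vec{u}(A_0,A_i)$; these are four unit vectors in $\mathbb{R}^{3}$ summing to zero, and by the definition of the angles at $A_{0}$ one has $\cos\alpha_{i0j}=\vec{e}_i\cdot\vec{e}_j$ for all $i\ne j$.

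For the three pairing identities \eqref{coseq1}--\eqref{coseq3} I would split the four indices into two complementary pairs. For instance, from $\vec{e}_1+\vec{e}_2=-(\vec{e}_3+\vec{e}_4)$, taking squared Euclidean norms of both sides and using $\|\vec{e}_i\|=1$ gives $2+2\,\vec{e}_1\cdot\vec{e}_2=2+2\,\vec{e}_3\cdot\vec{e}_4$, i.e. $\cos\alpha_{102}=\cos\alpha_{304}$. The same manipulation applied to the partitions $\{1,3\}\cup\{2,4\}$ and $\{1,4\}\cup\{2,3\}$ yields \eqref{coseq2} and \eqref{coseq3}. For \eqref{coseq4}, dot the balancing relation with the unit vector $\vec{e}_1$:
\[
\vec{e}_1\cdot\vec{e}_1+\vec{e}_1\cdot\vec{e}_2+\vec{e}_1\cdot\vec{e}_3+\vec{e}_1\cdot\vec{e}_4=0,
\]
which is exactly $1+\cos\alpha_{102}+\cos\alpha_{103}+\cos\alpha_{104}=0$. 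Dotting instead with $\vec{e}_2,\vec{e}_3,\vec{e}_4$ produces the three symmetric companions, which are consistent with \eqref{coseq1}--\eqref{coseq3} and will be recorded for later use.

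Since all of this is elementary bilinear algebra of unit vectors, I do not expect a real analytic obstacle; the only point requiring a word of care is justifying that the strict inequality hypothesis genuinely places us in Case (I) rather than in the degenerate Fermat--Cavallieri situation, which is immediate from the dichotomy stated above. If one prefers the spherical-coordinate viewpoint advertised in the Introduction, one may instead write each $\vec{e}_i=(\sin\theta_i\cos\varphi_i,\sin\theta_i\sin\varphi_i,\cos\theta_i)$ and read the four relations directly off the three scalar components of $\sum_i\vec{e}_i=\vec{0}$ together with one inner product; this is the form that will be convenient for the angle computations of Section~3.
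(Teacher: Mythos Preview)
Your proposal is correct and follows essentially the same approach as the paper: both derive \eqref{coseq1}--\eqref{coseq3} by partitioning the balancing condition into complementary pairs and squaring, and both obtain \eqref{coseq4} from the balancing condition via inner products. Your derivation of \eqref{coseq4} by dotting $\sum_i\vec{e}_i=\vec{0}$ with $\vec{e}_1$ is in fact slightly more direct than the paper's, which squares $\vec{e}_4=-(\vec{e}_1+\vec{e}_2+\vec{e}_3)$ and then invokes \eqref{coseq2} to replace $\cos\alpha_{203}$ by $\cos\alpha_{104}$.
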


\begin{proof}

The inner product of the unit vectors $\vec {u}(A_{0},A_{i}),$ $\vec {u}(A_{0},A_{j})$ yields:
\begin{equation}\label{innerproducti0j}
\vec {u}(A_{0},A_{i})\cdot \vec {u}(A_{0},A_{j})=\cos\alpha_{i0j},
\end{equation}

for i,j=1,2,3,4.

Taking into account the balancing condition of unit vectors $\vec {u}(A_{0},A_{i}),$ for $i=1,2,3,4,$ we get:

\begin{equation}\label{veccond1}
\vec {u}(A_{0},A_{1})+\vec {u}(A_{0},A_{2})= - (\vec {u}(A_{0},A_{3})+\vec {u}(A_{0},A_{4})),
\end{equation}

\begin{equation}\label{veccond2}
\vec {u}(A_{0},A_{2})+\vec {u}(A_{0},A_{3})= - (\vec {u}(A_{0},A_{1})+\vec {u}(A_{0},A_{4}))
\end{equation}

\begin{equation}\label{veccond3}
\vec {u}(A_{0},A_{1})+\vec {u}(A_{0},A_{3})= - (\vec {u}(A_{0},A_{2})+\vec {u}(A_{0},A_{4}))
\end{equation}

\begin{equation}\label{veccond4}
\vec {u}(A_{0},A_{4})= - \vec {u}(A_{0},A_{1})+\vec {u}(A_{0},A_{2})+\vec {u}(A_{0},A_{3})).
\end{equation}

By squaring both parts of (\ref{veccond1}), (\ref{veccond2}),(\ref{veccond3}) and taking into account (\ref{innerproducti0j}), we obtain
(\ref{coseq1}), (\ref{coseq2}) and (\ref{coseq3}), respectively.

By squaring both parts of (\ref{veccond4})and by substituting (\ref{innerproducti0j}) in the derived equation and taking into account (\ref{coseq1}), (\ref{coseq2}) and (\ref{coseq3}), we get (\ref{coseq4}).
Therefore, we derive that $\alpha_{102}=\alpha_{304},$ $\alpha_{203}=\alpha_{104}$ and $\alpha_{103}=\alpha_{204}.$

\end{proof}

\begin{theorem}\label{fundproperty}
The three bisecting lines having as a common vertex the Fermat-Torricelli point $A_{0}$ and formed by each pair of equal angles, which are seen by the opposite edges of $A_{1}A_{2}A_{3}A_{4}$  meet perpendicularly at $A_{0}.$
\end{theorem}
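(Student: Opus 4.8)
The plan is to set up unit vectors $\vec{e}_i := \vec{u}(A_0,A_i)$ at the Fermat–Torricelli point, and to exhibit the three bisecting lines explicitly as directions of the vectors $\vec{b}_1 := \vec{e}_1+\vec{e}_2 = -(\vec{e}_3+\vec{e}_4)$, $\vec{b}_2 := \vec{e}_2+\vec{e}_3 = -(\vec{e}_1+\vec{e}_4)$, and $\vec{b}_3 := \vec{e}_1+\vec{e}_3 = -(\vec{e}_2+\vec{e}_4)$, where the displayed identities are exactly the balancing conditions \eqref{veccond1}–\eqref{veccond3}. First I would verify that $\vec{b}_1$ really does bisect the angle $\angle A_1A_0A_2$: since $\vec{e}_1$ and $\vec{e}_2$ are unit vectors, $\vec{e}_1+\vec{e}_2$ lies along the internal bisector of the angle between them, and because $\vec{b}_1 = -(\vec{e}_3+\vec{e}_4)$ it simultaneously lies along the bisector of $\angle A_3A_0A_4$; together with Lemma~\ref{sumcosines} (which gives $\alpha_{102}=\alpha_{304}$) this is precisely the geometric content of ``the bisecting line formed by each pair of equal angles seen by opposite edges.'' The same argument applies to $\vec{b}_2$ and $\vec{b}_3$.

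The core computation is then to show $\vec{b}_1\cdot\vec{b}_2 = \vec{b}_2\cdot\vec{b}_3 = \vec{b}_1\cdot\vec{b}_3 = 0$. Expanding, for instance,
\[
\vec{b}_1\cdot\vec{b}_2 = (\vec{e}_1+\vec{e}_2)\cdot(\vec{e}_2+\vec{e}_3) = \vec{e}_1\cdot\vec{e}_2 + \vec{e}_1\cdot\vec{e}_3 + 1 + \vec{e}_2\cdot\vec{e}_3 = 1 + \cos\alpha_{102} + \cos\alpha_{103} + \cos\alpha_{203},
\]
using $\|\vec{e}_2\|=1$ and \eqref{innerproducti0j}. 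Now \eqref{coseq4} gives $1+\cos\alpha_{102}+\cos\alpha_{103}+\cos\alpha_{104}=0$, and \eqref{coseq2} gives $\cos\alpha_{203}=\cos\alpha_{104}$, so $\vec{b}_1\cdot\vec{b}_2 = 0$. I would then repeat this with the analogous regroupings: $\vec{b}_2\cdot\vec{b}_3 = (\vec{e}_2+\vec{e}_3)\cdot(\vec{e}_1+\vec{e}_3) = 1 + \cos\alpha_{102} + \cos\alpha_{203} + \cos\alpha_{103}$, which vanishes by the same two identities after replacing $\cos\alpha_{203}$ by $\cos\alpha_{104}$; and $\vec{b}_1\cdot\vec{b}_3 = (\vec{e}_1+\vec{e}_2)\cdot(\vec{e}_1+\vec{e}_3) = 1 + \cos\alpha_{103} + \cos\alpha_{102} + \cos\alpha_{203}$, identical in form and hence also zero. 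Thus all three pairwise inner products vanish, so the three bisecting lines are mutually orthogonal at $A_0$.

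One small point to address carefully: the three vectors $\vec{b}_1,\vec{b}_2,\vec{b}_3$ must all be nonzero for ``meeting perpendicularly'' to be meaningful, i.e. no two of $\vec{e}_1,\vec{e}_2,\vec{e}_3$ can be antipodal; but if, say, $\vec{e}_1 = -\vec{e}_2$ then $A_1,A_0,A_2$ are collinear with $A_0$ between them, and a standard argument (or the non-degeneracy hypothesis $\|\sum_{j\neq i}\vec{u}(A_j,A_i)\|>1$ together with the characterization of case (I)) rules this out, so the interior Fermat–Torricelli point has all four unit vectors in ``general position.'' The main obstacle is really just bookkeeping — keeping the index permutations straight so that each of the three dot products is reduced to the combination $1+\cos\alpha_{102}+\cos\alpha_{103}+\cos\alpha_{104}$ via the correct one of \eqref{coseq1}–\eqref{coseq3}; there is no analytic difficulty once the bisectors are identified with the sum vectors $\vec{e}_i+\vec{e}_j$. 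I would close by noting that since $\{\vec{b}_1,\vec{b}_2,\vec{b}_3\}$ are three mutually orthogonal nonzero vectors in $\mathbb{R}^3$, they form an orthogonal frame, which is the assertion of the theorem.
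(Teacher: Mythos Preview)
Your argument is correct and follows essentially the same route as the paper: identify each bisector with the sum $\vec{u}(A_0,A_i)+\vec{u}(A_0,A_j)$, expand the pairwise dot products to $1+\cos\alpha_{102}+\cos\alpha_{103}+\cos\alpha_{203}$, and kill this via Lemma~\ref{sumcosines}. Your version is in fact slightly more economical than the paper's in two places: you dispense with the spherical-coordinate parametrization (which the paper introduces but does not actually need for the dot-product identities), and you read off the collinearity of the bisectors of $\alpha_{102}$ and $\alpha_{304}$ directly from the balancing relation $\vec{e}_1+\vec{e}_2=-(\vec{e}_3+\vec{e}_4)$, whereas the paper recovers this by a separate normalized inner-product computation showing the value $-1$.
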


\begin{proof}

We express the unit vectors $\vec {u}(A_{0},A_{i})$ for $i=1,2,3,4,$ using spherical coordinates:

\begin{equation}\label{spherical1}
\vec {u}(A_{0},A_{1})=(1,0,0),
\end{equation}

\begin{equation}\label{spherical2}
\vec {u}(A_{0},A_{2})=(\cos\alpha_{102},\sin\alpha_{102},0),
\end{equation}

\begin{equation}\label{spherical3}
\vec {u}(A_{0},A_{3})=(\cos a_{3,102} \cos\omega_{3,102},\cos a_{3,102} \sin\omega_{3,102},\sin a_{3,102} ),
\end{equation}

\begin{equation}\label{spherical4}
\vec {u}(A_{0},A_{4})=(\cos a_{4,102} \cos\omega_{4,102},\cos a_{4,102} \sin\omega_{4,102},\sin a_{4,102} ).
\end{equation}

The unit vector $\vec{\delta}_{i0j}$ of the angle bisector that corresponds to the angle $\alpha_{i0j}$ is given by:

\begin{equation}\label{anglebisectori0j}
\vec{\delta}_{i0j}=\vec {u}(A_{0},A_{i})+\vec {u}(A_{0},A_{j})
\end{equation}

for $i,j=1,2,3,4, i\neq j.$

By replacing (\ref{spherical1}), (\ref{spherical2}), (\ref{spherical3}), (\ref{spherical4}) in (\ref{anglebisectori0j}), we get:

\begin{equation}\label{coordinatesdelta102}
\vec{\delta}_{102}=(1+\cos\alpha_{102},\sin\alpha_{102},0)
\end{equation}

\begin{equation}\label{coordinatesdelta103}
\vec{\delta}_{103}=(1+\cos a_{3,102} \cos\omega_{3,102},\cos a_{3,102} \sin\omega_{3,102},\sin a_{3,102} )
\end{equation}

\begin{equation}\label{coordinatesdelta104}
\vec{\delta}_{104}=(1+\cos a_{4,102} \cos\omega_{4,102},\cos a_{4,102} \sin\omega_{4,102},\sin a_{4,102} )
\end{equation}

\begin{equation}\label{coordinatesdelta203}
\vec{\delta}_{203}=(\cos\alpha_{102}+\cos a_{3,102} \cos\omega_{3,102},\sin\alpha_{102}+\cos a_{3,102} \sin\omega_{3,102},\sin a_{3,102} )
\end{equation}

\begin{equation}\label{coordinatesdelta204}
\vec{\delta}_{204}=(\cos\alpha_{102}+\cos a_{4,102} \cos\omega_{4,102},\sin\alpha_{102}+\cos a_{4,102} \sin\omega_{4,102},\sin a_{4,102} )
\end{equation}

\begin{eqnarray}\label{coordinatesdelta304}
\vec{\delta}_{304}=(\cos a_{3,102} \cos\omega_{3,102}+\cos a_{4,102} \cos\omega_{4,102},\cos a_{3,102} \sin\omega_{3,102}+ \nonumber\\ \cos a_{4,102}\sin\omega_{4,102},\sin a_{3,102}+\sin a_{4,102} )
\end{eqnarray}

Taking into account (\ref{coordinatesdelta102}), (\ref{coordinatesdelta103}), (\ref{coordinatesdelta104}), (\ref{coordinatesdelta203}), (\ref{coordinatesdelta204}), (\ref{coordinatesdelta304}), we obtain that:
\begin{equation}\label{innerprod102203}
\vec{\delta}_{102}\cdot \vec{\delta}_{203}=1+\cos\alpha_{102}+\cos\alpha_{103}+\cos\alpha_{203}.
\end{equation}

\begin{equation}\label{innerprod102103}
\vec{\delta}_{102}\cdot \vec{\delta}_{103}=1+\cos\alpha_{102}+\cos\alpha_{103}+\cos\alpha_{203}.
\end{equation}

By applying Lemma~\ref{sumcosines} in (\ref{innerprod102203}), (\ref{innerprod102103}), we derive that:

\[\vec{\delta}_{102}\cdot \vec{\delta}_{203}=\vec{\delta}_{102}\cdot \vec{\delta}_{103}=0,\]
which yields that $\vec{\delta}_{102}\perp \vec{\delta}_{203} \perp \vec{\delta}_{103}. $
Therefore, $\vec{\delta}_{102}, \vec{\delta}_{203}, \vec{\delta}_{103} $ is an orthonormal system of unit vectors.

Taking into account (\ref{coordinatesdelta102}), (\ref{coordinatesdelta103}), (\ref{coordinatesdelta104}), (\ref{coordinatesdelta203}), (\ref{coordinatesdelta204}), (\ref{coordinatesdelta304}), we obtain that:

\begin{equation}\label{innerprod102304}
\frac{\vec{\delta}_{102}}{|\vec{\delta}_{102}|}\cdot \frac{\vec{\delta}_{304}}{|\vec{\delta}_{304}|}=\frac{1}{\sqrt{2(1+\cos\alpha_{102})}}\frac{1}{\sqrt{2(1+\cos\alpha_{304})}}(-2(1+\cos\alpha_{102})).
\end{equation}

By replacing $\alpha_{304}=\alpha_{102}$ (Lemma~\ref{sumcosines}) in (\ref{innerprod102304}), we derive that:

\[\frac{\vec{\delta}_{102}}{|\vec{\delta}_{102}|}\cdot \frac{\vec{\delta}_{304}}{|\vec{\delta}_{304}|}=-1.\]
Hence, the angle bisectors of the angles $\alpha_{102}$ and $\alpha_{304}$ belong to the same line.

By following the same process and by applying Lemma~\ref{sumcosines}, we get:
\[\frac{\vec{\delta}_{203}}{|\vec{\delta}_{203}|}\cdot \frac{\vec{\delta}_{104}}{|\vec{\delta}_{104}|}=-1\]
and
\[\frac{\vec{\delta}_{103}}{|\vec{\delta}_{103}|}\cdot \frac{\vec{\delta}_{204}}{|\vec{\delta}_{204}|}=-1,\]
which yields that the angle bisectors of the angles $\alpha_{203}$ and $\alpha_{104}$ belong to the same line
and the angle bisectors of the angles $\alpha_{103}$ and $\alpha_{204}$ belong to the same line, respectively.
\end{proof}

\begin{remark}
We should not confuse the fundamental property of the Fermat-Torricelli tree solution for tetrahedra with the Steiner tree solution for tetrahedra having two nodes (Fermat-Torricelli points) in $\mathbb{R}^{3}.$ A specific case was proved in \cite[Theorem~3.1]{RubinsteinThomasWeng:02} for the Steiner tree problem, which states that:
if $A_{1}A_{2}A_{3}A_{4}$ is a centralized symmetric tetrahedron, then its three Simpson lines meet perpendicularly at its center $A_{0}.$ Therefore, the fundamental property of the Fermat-Torricelli tree for any tetrahedron $A_{1}A_{2}A_{3}A_{4},$  where three bisecting lines meet perpendicularly is much stronger than the one proved for Steiner trees for centralized symmetric tetrahedra in $\mathbb{R}^{3}.$  
\end{remark}

\section{Unsolvability of the Fermat-Torricelli problem for tetrahedra in $\mathbb{R}^{3}$ by Compass and Ruler}

We need the following lemma, which gives the position of four rays, which meet at a common point, in order to prove the unsolvability of the Fermat-Torricelli problem for $A_{1}A_{2}A_{3}A_{4}$ in $\mathbb{R}^{3}$ by compass and ruler. 

\begin{lemma}\cite[Proposition~1]{Zachos:20}\label{angle304}
The position of four line segments $A_{i}A_{0}$ which meet at $A_{0}$ for $i=1,2,3,4$ depend on exactly five given angles
$\alpha_{102},$ $\alpha_{103},$ $\alpha_{104},$ $\alpha_{203}$ and
$\alpha_{204}.$

The sixth angle $\alpha_{304}$ is calculated by the following formula:

\begin{eqnarray}\label{calcalpha3042}
&&\cos\alpha_{304}=\frac{1}{4} [4 \cos\alpha _{103} (\cos\alpha
_{104}-\cos\alpha _{102} \cos\alpha _{204})+\nonumber\\
&&+2 \left(b+2 \cos\alpha _{203} \left(-\cos\alpha _{102}
\cos\alpha _{104}+\cos\alpha _{204}\right)\right)] \csc{}^2\alpha
_{102}\nonumber\\
\end{eqnarray}

where

\begin{eqnarray}\label{calcalpha304auxvar}
b\equiv\sqrt{\prod_{i=3}^{4}\left(1+\cos\left(2 \alpha
_{102}\right)+\cos\left(2 \alpha _{10i}\right)+\cos\left(2 \alpha
_{20i}\right)-4 \cos\alpha _{102} \cos\alpha _{10i} \cos\alpha
_{20i}\right)}\nonumber\\.
\end{eqnarray}

for $i,k,m=1,2,3,4,$ and $i \ne k \ne m.$
\end{lemma}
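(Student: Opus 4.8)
The plan is to reduce everything to an elementary computation with the four unit vectors $\vec u(A_0,A_i)$, exactly in the spirit of the proof of Theorem~\ref{fundproperty}. First I would fix the Euclidean frame at $A_0$ the way (\ref{spherical1})--(\ref{spherical4}) do: send $\vec u(A_0,A_1)$ to $(1,0,0)$, place $\vec u(A_0,A_2)$ in the $xy$-plane, and describe $\vec u(A_0,A_3)$ and $\vec u(A_0,A_4)$ by an elevation angle $a_{i,102}$ (which I take in $[0,\pi/2]$, reflecting $A_3$ or $A_4$ through the plane $A_0A_1A_2$ if necessary) and an azimuth $\omega_{i,102}$. Since this choice exhausts the three rotational degrees of freedom of the frame at $A_0$, the position of the four rays up to congruence is encoded by the five scalars $\alpha_{102},a_{3,102},\omega_{3,102},a_{4,102},\omega_{4,102}$; translating these into pairwise angles will give the first assertion of the lemma, provided I show that the five scalars are recoverable from $\alpha_{102},\alpha_{103},\alpha_{104},\alpha_{203},\alpha_{204}$.

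That recovery is a $2\times2$ linear system. Taking inner products, $\cos\alpha_{10i}=\vec u(A_0,A_1)\cdot\vec u(A_0,A_i)=\cos a_{i,102}\cos\omega_{i,102}$ and $\cos\alpha_{20i}=\vec u(A_0,A_2)\cdot\vec u(A_0,A_i)=\cos\alpha_{102}\cos a_{i,102}\cos\omega_{i,102}+\sin\alpha_{102}\cos a_{i,102}\sin\omega_{i,102}$ for $i=3,4$. The determinant of this system in the unknowns $\cos a_{i,102}\cos\omega_{i,102}$ and $\cos a_{i,102}\sin\omega_{i,102}$ is $\sin\alpha_{102}\neq0$, so $\cos a_{i,102}\cos\omega_{i,102}=\cos\alpha_{10i}$ and $\cos a_{i,102}\sin\omega_{i,102}=(\cos\alpha_{20i}-\cos\alpha_{102}\cos\alpha_{10i})/\sin\alpha_{102}$, and then $\sin a_{i,102}\ge 0$ is fixed by $\sin^2 a_{i,102}=1-\cos^2\alpha_{10i}-(\cos\alpha_{20i}-\cos\alpha_{102}\cos\alpha_{10i})^2/\sin^2\alpha_{102}$. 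In these coordinates $\cos\alpha_{304}=\vec u(A_0,A_3)\cdot\vec u(A_0,A_4)$ expands to $\cos\alpha_{103}\cos\alpha_{104}+(\cos a_{3,102}\sin\omega_{3,102})(\cos a_{4,102}\sin\omega_{4,102})+\sin a_{3,102}\sin a_{4,102}$.

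The one nontrivial identity is for $\sin a_{3,102}\sin a_{4,102}$. Clearing denominators, $\sin^2\alpha_{102}\sin^2 a_{i,102}=\sin^2\alpha_{102}\sin^2\alpha_{10i}-(\cos\alpha_{20i}-\cos\alpha_{102}\cos\alpha_{10i})^2$, and using $\sin^2\theta=1-\cos^2\theta$ together with $\cos 2\theta=2\cos^2\theta-1$ this collapses to $-\tfrac12\bigl(1+\cos2\alpha_{102}+\cos2\alpha_{10i}+\cos2\alpha_{20i}-4\cos\alpha_{102}\cos\alpha_{10i}\cos\alpha_{20i}\bigr)$; that is, $-2\sin^2\alpha_{102}\sin^2 a_{i,102}$ equals precisely the $i$-th factor under the radical in (\ref{calcalpha304auxvar}). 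In particular each such factor is $\le 0$, so the product of the two factors is $\ge 0$, $b$ is a well-defined nonnegative real, and $\sin a_{3,102}\sin a_{4,102}=\tfrac12\,b\,\csc^2\alpha_{102}$. Substituting this together with $\cos a_{i,102}\sin\omega_{i,102}=(\cos\alpha_{20i}-\cos\alpha_{102}\cos\alpha_{10i})/\sin\alpha_{102}$ into the expansion of $\cos\alpha_{304}$, multiplying through by $\sin^2\alpha_{102}$, expanding the single remaining product and regrouping, one arrives at the bracket in (\ref{calcalpha3042}); dividing back by $\sin^2\alpha_{102}$ (i.e.\ reintroducing $\csc^2\alpha_{102}$) finishes the computation.

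I expect the main obstacle to be purely organizational: keeping the trigonometric rewriting of $\sin^2\alpha_{102}\sin^2 a_{i,102}$ clean, and being explicit that the ``$+b$'' branch in (\ref{calcalpha3042}) is the one in which $A_3$ and $A_4$ lie on the same side of the plane $A_0A_1A_2$ (both elevations in $[0,\pi/2]$), the opposite sign $-b$ corresponding to the mirror configuration, which shares the five angles $\alpha_{102},\alpha_{103},\alpha_{104},\alpha_{203},\alpha_{204}$ but generically has a different value of $\alpha_{304}$. Beyond this, no genuinely hard step occurs; the substance of the lemma is the dimension count producing the five free parameters, plus the square-root identity for $b$.
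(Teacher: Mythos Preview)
The paper does not give its own proof of this lemma; it is quoted verbatim from \cite[Proposition~1]{Zachos:20} and used as a black box in the proof of Theorem~\ref{unsolvabilityFTtetrahedron}. So there is nothing in the present paper to compare your argument against.

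That said, your proof is correct and is almost certainly the intended one: you use precisely the spherical-coordinate frame (\ref{spherical1})--(\ref{spherical4}) that the paper itself sets up for Theorem~\ref{fundproperty}, solve the $2\times 2$ linear system for the projected components of $\vec u(A_0,A_3)$ and $\vec u(A_0,A_4)$, and then expand $\vec u(A_0,A_3)\cdot\vec u(A_0,A_4)$. I checked your key identity $-2\sin^2\alpha_{102}\sin^2 a_{i,102}=1+\cos 2\alpha_{102}+\cos 2\alpha_{10i}+\cos 2\alpha_{20i}-4\cos\alpha_{102}\cos\alpha_{10i}\cos\alpha_{20i}$ and the final regrouping against (\ref{calcalpha3042}); both are right. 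Your remark that the $+b$ branch corresponds to $A_3,A_4$ on the same side of the plane $A_0A_1A_2$ (and $-b$ to the mirror configuration) is a genuine clarification that the statement of the lemma, as quoted, leaves implicit.
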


\begin{theorem}\label{unsolvabilityFTtetrahedron}
The Fermat-Torricelli problem for four non-collinear and non-coplanar points forming a tetrahedron in $\mathbb{R}^{3}$ is not in general solvable by Euclidean constructions.
\end{theorem}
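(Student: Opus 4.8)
The plan is to reduce the statement to the non-constructibility of a single real number attached to one explicit tetrahedron, and then to realize that number as a root of a polynomial over $\mathbb{Q}$ whose minimal polynomial has degree not a power of $2$. Recall that ``solvable by a Euclidean construction'' means that the coordinates of $A_{0}$ are obtainable from the coordinates of $A_{1},A_{2},A_{3},A_{4}$ by finitely many ruler-and-compass operations; hence, if the four vertices have rational coordinates and $A_{0}$ were constructible, every coordinate of $A_{0}$ would lie in a field obtained from $\mathbb{Q}$ by a finite tower of quadratic extensions, and in particular the number
\[
\cos\alpha_{102}=\frac{(A_{1}-A_{0})\cdot(A_{2}-A_{0})}{\|A_{1}-A_{0}\|\,\|A_{2}-A_{0}\|}
\]
would be constructible, so that $[\mathbb{Q}(\cos\alpha_{102}):\mathbb{Q}]$ would be a power of $2$. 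It therefore suffices to exhibit one tetrahedron with rational vertices lying in Case~(I) of the Introduction (so that $A_{0}$ is the interior Fermat--Torricelli point, whose existence and uniqueness we already know, and Lemma~\ref{sumcosines} applies) for which $[\mathbb{Q}(\cos\alpha_{102}):\mathbb{Q}]$ is not a power of $2$; this already shows the problem is not solvable \emph{in general}.

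Next I would assemble the algebraic equation that $\cos\alpha_{102}$ must satisfy, using the two lemmas already at our disposal rather than the coordinate resultants of \cite{Bajaj:88},\cite{Mehlhos:00},\cite{MelzakCockayne:69}. By Lemma~\ref{sumcosines}, all six angles at $A_{0}$ are governed by the two quantities $p=\cos\alpha_{102}=\cos\alpha_{304}$ and $q=\cos\alpha_{103}=\cos\alpha_{204}$, with $\cos\alpha_{104}=\cos\alpha_{203}=-1-p-q$. Substituting $\cos\alpha_{304}=p$, together with these values of $\cos\alpha_{203},\cos\alpha_{204},\cos\alpha_{104}$, into formula \eqref{calcalpha3042} of Lemma~\ref{angle304}, clearing $\csc^{2}\alpha_{102}$ via $\sin^{2}\alpha_{102}=1-p^{2}$, isolating the radical $b$ of \eqref{calcalpha304auxvar} and squaring, yields a polynomial relation $G(p,q)=0$ with integer coefficients — the implicit expression for the two angles seen from $A_{0}$ announced in the abstract — together with the sign condition selecting the geometrically correct branch of $b$. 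For the concrete tetrahedron I would then write the balancing condition $\sum_{i=1}^{4}(A_{0}-A_{i})/r_{i}=\vec{0}$ with $r_{i}=\|A_{0}-A_{i}\|$, together with $r_{i}^{2}=\|A_{0}-A_{i}\|^{2}$, and eliminate $x_{0},y_{0},z_{0}$ and $r_{1},\dots,r_{4}$; this delivers a second relation coming from the actual shape of the tetrahedron. Choosing the tetrahedron with just enough symmetry — for instance, a single mirror plane forcing two of the free angles to coincide, so that the extra relation reads $q=p$, yet not so symmetric that $p$ becomes rational — makes the system simple enough that eliminating $q$ produces one polynomial $\Phi(p)\in\mathbb{Z}[p]$ vanishing at $p=\cos\alpha_{102}$.

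The decisive step is to arrange the example so that the factor of $\Phi$ of which $\cos\alpha_{102}$ is a root is irreducible of degree $3$ (or, more generally, of degree not a power of $2$). Irreducibility of a cubic over $\mathbb{Q}$ is cheap: it amounts to having no rational root, checked by the rational root test on the explicit coefficients, and degree $3$ already rules out constructibility. Here one must also verify that the real root of this factor lying in the physically admissible range is indeed $\cos\alpha_{102}$ for the genuine interior minimizer — discarding the extraneous roots introduced by squaring $b$ and by the elimination, using the bounds $\|\sum_{j\ne i}\vec{u}(A_{j},A_{i})\|>1$ of Case~(I) and, if needed, the value of the objective $f$ — so that the identification $[\mathbb{Q}(\cos\alpha_{102}):\mathbb{Q}]=3$ is legitimate. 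Combined with the first paragraph, this gives that $A_{0}$ is not ruler-and-compass constructible for this tetrahedron, which is the assertion.

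I expect the main obstacle to be precisely this bookkeeping: carrying out the elimination while keeping all relations polynomial (introducing the $r_{i}$ as auxiliary variables to linearize the unit vectors), certifying that the degree-$3$ factor is irreducible over $\mathbb{Q}$, and certifying that it corresponds to the true Fermat--Torricelli configuration rather than to a spurious critical point or to a vertex (Fermat--Cavallieri) solution. The computations are elementary but bulky, so in practice one fixes the explicit tetrahedron, records $\Phi(p)$ together with its factorization, and verifies the rational root test on the relevant factor directly; the conceptual content is entirely in the reduction carried out in the preceding paragraphs.
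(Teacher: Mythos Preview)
Your proposal and the paper share the same opening move --- substitute the angle equalities of Lemma~\ref{sumcosines} into formula~\eqref{calcalpha3042} of Lemma~\ref{angle304} to obtain a relation in the two free cosines $p=\cos\alpha_{102}$ and $q=\cos\alpha_{103}$ --- but then part ways sharply. The paper's argument stops right there: it records the resulting implicit expression $\alpha_{102}=g(\alpha_{102},\alpha_{203})$ and simply asserts that the impossibility of solving it explicitly ``is responsible for the unsolvability,'' with no specific tetrahedron, no minimal-polynomial computation, and no appeal to the constructibility criterion. You, by contrast, propose to fix a rational tetrahedron, eliminate down to a univariate $\Phi(p)\in\mathbb{Z}[p]$, and certify that the relevant irreducible factor has degree not a power of~$2$; this is precisely the Galois-theoretic template of Bajaj and Mehlhos that the paper announces itself as an \emph{alternative} to. What your route buys is an actual proof --- an implicit relation between two angles does not by itself preclude ruler-and-compass constructibility --- at the cost of the explicit example and bookkeeping you correctly flag as still to be done. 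One caution on that bookkeeping: Lemma~\ref{angle304} is a universal identity among the six angles of \emph{any} four concurrent rays, while by Lemma~\ref{sumcosines} the Fermat--Torricelli configurations already form a two-parameter family in $(p,q)$; a dimension count therefore suggests that your substituted relation $G(p,q)=0$ collapses to an identity after squaring out $b$, in which case Lemma~\ref{angle304} contributes nothing and the entire weight of the elimination falls on the coordinate-level balancing equations for the chosen tetrahedron. That route still works, but it is worth checking before you rely on $G$ as one of your two equations.
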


\begin{proof}
By replacing $\alpha_{304}=\alpha_{102},$ $\alpha_{104}=\alpha_{203}$ and $\alpha_{103}=\alpha_{204}$ in (\ref{calcalpha3042}) taken from Lemma~\ref{angle304}, we derive the implicit function $a_{102}=g(\alpha_{102},\alpha_{203}).$ Therefore, we cannot derive in general from this implicit expression an explicit function of $\alpha_{102}$ with respect to $\alpha_{203}.$ Hence, this functional dependence is responsible for the unsolvability of the Fermat-Torricelli problem for tetrahedra in $\mathbb{R}^{3}.$
\end{proof}

\section{Concluding Remarks}

By enriching Synge's construction with the fundamental property of the Fermat-Torricelli point for tetrahedra in $\mathbb{R}^{3},$ we may obtain the Fermat-Torricelli tree sausage, in order to develop some models for  the determination of 3-D minimum-energy configurations for macromolecular structures such as proteins and DNA, instead of working with the method of Steiner minimal trees established by Stanton and J. Mc Gregor Smith and W. Smith (\cite{StantonSmith}),\cite{SmithSMith:95}).

\end{document}